\documentclass{amsart}

\usepackage[english]{babel}
\usepackage[utf8]{inputenc}

\usepackage{amsmath, amscd, 
  xspace}     

\usepackage{amssymb,hyperref}
\usepackage{color,fullpage}
\usepackage{enumerate}
\usepackage[all,cmtip]{xy}
\theoremstyle{plain}
\newtheorem{note}[subsection]{Note}

\newtheorem{theo}[subsection]{Theorem}
\newtheorem{prop}[subsection]{Proposition}
\newtheorem{lemma}[subsection]{Lemma}
\newtheorem{cor}[subsection]{Corollary}
\theoremstyle{definition}
\newtheorem{Def}[subsection]{Definition}
\newtheorem{remark}[subsection]{Remark}

\newcommand{\eg}{e.g.,\xspace}

\newcommand{\bQ}{{\mathbb Q}}

\newcommand{\bF}{\mathbb F}

\newcommand{\sab}{{\ensuremath{\mathcal A}}} 

\newcommand{\Tor}{\operatorname{Tor}}

\newcommand{\smsh}{{\wedge}}

\newcommand{\we}{\overset{\sim}{\rightarrow}}
\newcommand{\ew}{\overset{\sim}{\leftarrow}}

\def\calM{\mathcal{M}}
\def\calN{\mathcal{N}}
\def\calC{\mathcal{C}}

\newcommand{\id}{\mathrm{id}}

\newcommand{\THH}{{T\hspace{-.5mm}H\hspace{-.5mm}H}}
\newcommand{\thh}[1]{\THH^{[#1]}}
\newcommand{\HH}{H\hspace{-.5mm}H}
\newcommand{\hh}[1]{\HH^{[#1]}}
\def\F{\mathbb{F}}
\def\Z{\mathbb{Z}}

\def\ess{\mathbb{S}}
\def\Sp{\text{$\sf{Sp}$}}
\def\ra{\rightarrow}

\begin{document}

\title[On higher topological Hochschild homology]{On higher topological Hochschild homology of rings of integers}
\author{Bj{\o}rn Ian Dundas}
  \address{Department of Mathematics, University of Bergen, 
    Postboks 7803, 5020 Bergen, Norway} 
  \email{dundas@math.uib.no}

\author{Ayelet Lindenstrauss}
  \address{Department of Mathematics, Indiana University, Bloomington IN 47405, USA}
  \email{alindens@indiana.edu}

\author{Birgit Richter}
  \address{Fachbereich Mathematik, Universit\"at Hamburg, Bundesstra{\ss}e 55, 20146 Hamburg, Germany} 
  \email{birgit.richter@uni-hamburg.de}

\begin{abstract}
We determine higher topological Hochschild homology of rings of
integers in number fields with coefficients in suitable residue fields.
We use the iterative description of higher $\THH$ for this and Postnikov
arguments that allow us reduce the necessary computations to calculations
in homological algebra, starting from the results of B\"okstedt
and Lindenstrauss-Madsen on (ordinary) topological Hochschild homology.
\end{abstract}

\maketitle

\section{Introduction}

Factorization homology provides a general framework for associating an
invariant, $\int_M R$, to
a topological $n$-dimensional manifold $M$ and an algebra $R$ over the
Boardman-Vogt little $n$-cubes operad  \cite{BV}. 
For a comprehensive overview of factorization homology see
\cite{af}. Commutative 
algebras are algebras 
over the little $n$-cubes operad for all $n$. In \cite[Proposition 
5.1]{af} it is shown  
that for commutative algebras $R$ the factorization homology, $\int_M R$, 
reduces to $R \otimes U(M)$ where $U(M)$ is  
the underlying topological space of $M$ and $(-) \otimes U(M)$ refers to the
topological enrichment of the corresponding category. 

In the setting of spectra, this shows 
that for a commutative ring spectrum $R$ the factorization homology
with respect to an $n$-sphere, $\mathbb{S}^n$, can be expressed in
classical terms as $R \otimes \mathbb{S}^n$ and this is what we call 
topological Hochschild homology of order $n$ of $R$. 
 
We need a version with coefficients and we use simplicial sets
as combinatorial models for topological spaces. It is shown in
\cite[VII.3.2]{EKMM} that both enrichments (simplicial sets or
topological spaces) give equivalent outcome. 
Topological Hochschild homology of a ring spectrum $R$ with
coefficients in a  module $N$ is obtained as a simplicial 
object where one uses the standard 
simplicial model of the $1$-sphere, $S^1$, and glues
$N$ to the basepoint of $S^1$ and $R$ to all other simplices in
$S^1$. 
Topological Hochschild homology of order $n$ of a commutative ring 
spectrum $R$ with 
coefficients in $N$, $\thh{n}(R,N)$, is the analogue of
this where we use $S^n = (S^1)^{\wedge n}$ as a simplicial model of
the $n$-sphere and glue again $N$ to the basepoint and $R$ to all
other simplices of $S^n$.   For a definition using the topological
circle  $\mathbb{S}^1$ see
\cite{MSV}; for an approach preserving the epicyclic structure, see
\eg \cite{BCD} or \cite{Ve}. 

There are natural stabilization maps
\[ \pi_*(\thh{n}(R,N)) \ra \pi_{*+1}(\thh{n+1}(R,N)) \]
whose colimit gives the topological Andr\'e-Quillen homology of $R$ with
coefficients in $N$ as defined in \cite{Ba}.

Topological Hochschild homology, $\THH$, of rings of integers in
number fields
is  well-understood: B\"okstedt \cite{boe} calculated $\THH$ of the
integers and in \cite{LM} the general case is covered. The aim of this
paper is the calculation of higher order topological Hochschild
homology of rings of integers in number fields with coefficients in a suitable
residue field.

If $N=R$, then we will abbreviate $\thh{n}(R,R)$ as 
$\thh{n}(R)$. If  $A$ is a
commutative discrete or simplicial ring and $M$ an $A$-module, we
abuse notation and write $\thh{n}(A)$ 
or $\thh{n}(A,M)$ 
for $\thh{n}(HA)$ or $\thh{n}(HA,HM)$, respectively, where $H$ is the
Eilenberg-Mac\,Lane spectrum functor.

John Rognes' redshift conjecture relates algebraic K-theory to 
chromatic phenomena: The stable homotopy category has a 
chromatic filtration and roughly the conjecture states that
algebraic K-theory of a commutative ring spectrum of chromatic level
$n$ is of chromatic level $n+1$, thus algebraic K-theory causes a
chromatic redshift. See \cite[Conjecture 1.3]{ausonirognes} for
a statement of the conjecture.

Replacing spheres by tori gives rise to torus homology and there is work by
Brun, Carlsson, Douglas, Dundas, Rognes, Veen \cite{BCD,cdd,Ve} and others on
this. The hope 
is that, like algebraic K-theory, the homotopy fixed points with
respect to the torus action of torus homology will 
also reveal red-shift phenomena. Positive results in that direction were
obtained by Rognes and
Veen \cite{Ve} and there is ongoing work by Ausoni and Dundas
\cite{ausonidundas}  extending these results to all chromatic
layers. Both approaches use cellular decompositions of tori, and rely
on concrete calculations for each cell, showing that 
higher topological Hochschild homology can be important for
understanding chromatic phenomena.

\subsubsection*{Statement of results}
In the following we give a brief description of our results and
explain how one can use them together with local-to-global techniques and
Bockstein spectral sequences in order to determine 
higher topological Hochschild homology of rings of integers in number
fields. 

We will phrase our results in terms of
iterated $\Tor$ groups:

\begin{Def}\label{defn:Bn}
 Let $k$ be a field. Let $B^1_k(x)$ be the
  polynomial algebra $P_k(x)$ on a generator $x$ in degree $2m$.
  Inductively, we define the $k$-algebra
  $B_k^n(x)=\Tor^{B_k^{n-1}(x)}_*(k,k)$.
\end{Def}
By Cartan \cite{C}, $B_k^2(x)$ is the exterior algebra
$\Lambda_k(\sigma x)$ on a
generator $\sigma x$ in degree $2m+1$; after that, we get a divided
power algebra on a generator of degree $2m+2$, and after that, the
formulas become more complicated (but see \cite{wit} for an
illustration of what the $B_k^n(x)$ look like when $k$ is a finite
field).

\medskip \noindent
In the following, all tensors are over $\bF_p$ unless otherwise
explicitly marked.  Our first result is:

\medskip\noindent{\bf Theorem~\ref{thm:thhnhz}} 
  Let $n\geq1$.  Then $\thh{n}_*(\Z,\bF_p)\cong
  B_{\bF_p}^{n}(x)\otimes B_{\bF_p}^{n+1}(y)$ where $|x|=2p$ and
  $|y|=2p-2$.

\medskip
\noindent
In Remark \ref{rem:thhfp} below we explain how to adapt our argument
to show that 
\[ \thh{n}_*(\bF_p)\cong B_{\bF_p}^{n}(\mu) \]
for a generator $\mu$ of degree two. This result was first obtained by
Basterra and Mandell by other methods. 

\medskip
The calculation of  $\thh{n}_*(\Z,\bF_p)$ along with a calculation of
the Bockstein spectral sequence on it 
would give us $\thh{n}_*(\Z)$. We use this technique in \cite{dlr2} to
determine $\thh{2}_*(\Z_{(p)})$ as a graded commutative ring (see
\cite[Theorem 2.1]{dlr2}). An additive identification of
$\thh{2}_*(\Z)$ is obtained in \cite[Theorem 1.6]{klang}, using
factorization homology and the
identification of $H\Z$ as a Thom spectrum with an algebra structure
over the little $2$-cubes operad.

For a calculation of $\thh{n}(A)$ for more general number rings $A$,
observe first that for 
any $n$ and any commutative ring $A$, $\thh{n}_0(A)\cong A$, since in
the definition of $\thh{n}(A)$, both $d_0$ and $d_1$ have to multiply
all copies of $HA$ indexed on all the $1$-simplices into the copy that
sits over the basepoint, and so by the commutativity of $HA$, $d_0$
and $d_1$ induce the same maps on homology.

The B\"okstedt spectral sequence for higher topological Hochschild
homology (Proposition~7.2 in \cite{wit}) with rational coefficients is
a spectral sequence  with
\[ E^2_{*,*} = \hh{n}(H_*(HA;\bQ))\Rightarrow
 H_*(\thh{n}(A); \bQ). \]
Since $H_*(HA;\bQ)$ consists just of
 $A\otimes \bQ$ in dimension zero and since for a number ring $A$,
 $A\otimes \bQ$ is \'etale over $\bQ$, by Theorem~9.1(a) of
 \cite{wit}, for $*>0$, $\thh{n}_*(A)$ consists entirely of torsion.

 However, since $A$ is a number ring, and hence a Dedekind domain, any
 finitely generated torsion module over it is a finite direct sum of
 modules $A/P_i^{k_i}$, with $P_i$ nonzero prime ideals and $k_i\geq
 1$.
 For each such prime ideal $P_i$, there is a unique prime $p\in\Z$
 for which $pA\subseteq P_i$, and if we consider
 $\thh{n}(A)^\wedge_p$, its homotopy groups in positive dimensions
 will be all the modules
  $A/P_i^{k_i}$ where  $pA\subseteq P_i$, and none of the others.
The methods of Addendum 6.2 in \cite{hm}, which show that for
  any number ring $A$, $\THH(A)^\wedge_p\simeq
  \THH(A^\wedge_p)^\wedge_p$, also show that for general $n\geq 1$,
\[\thh{n}(A)^\wedge_p\simeq \thh{n}(A^\wedge_p)^\wedge_p. \] 
So in order to understand the $P_i$ torsion in $\thh{n}(A)$, we could
see it instead in 
\[\thh{n}(A^\wedge_p)\cong
\thh{n}\left(\prod_{P_i\ \mathrm{prime},\  pA\subseteq P_i} A^\wedge_{P_i}\right)
\simeq
\prod_{P_i\ \mathrm{prime},\  pA\subseteq P_i} \thh{n}(A^\wedge_{P_i}).
\] 
Then,  like calculating $\thh{n}_*(\Z,\bF_p)$ was an intermediate
goal in the calculation of $\thh{n}_*(\Z)$, calculating
$\thh{n}_*(A^\wedge_{P_i},(A^\wedge_{P_i})/P_i)$ is an intermediate
goal in the calculation of $\thh{n}_*(A^\wedge_{P_i})$.

We calculate the groups
$\thh{n}_*(A^\wedge_{P_i},(A^\wedge_{P_i})/P_i)$ below, obtaining

\medskip\noindent{\bf Theorem~\ref{numberring}}
Let $A$ be the ring of integers in a number field,  and let $P$ be a
nonzero prime ideal in $A$. Denote the
residue field
$A/P$ by $\F_q$. Then

\[ \thh{n}_*(A^\wedge_P, A/P)\cong B_{\F_q}^{n}(x_P)\otimes_{\F_q}
B_{\F_q}^{n+1}(y_P) \] 
where
\begin{enumerate}
\item[(i)] $|x_P| =2$ and $|y_P|=0$ if $A$ is  ramified over $\Z$ at $P$, and

\item[(ii)] $|x_P|=2p$ and   $|y_P|=2p-2$, if $A$ is unramified over
  $\Z$ at $P$.
\end{enumerate}
This gives  the homotopy groups of
\[ \thh{n}(A,A/{P_i})\simeq
\thh{n}(A^\wedge_{P_i})\wedge _{H(A^\wedge_{P_i})} H(A/P_i). \] 
As in the $n=1$ case, multiplying $H\Z$ into the copy of
$H(A^\wedge_{P_i})$ over the basepoint shows that
$\thh{n}(A^\wedge_{P_i})$ is a retract of $H\Z\wedge
\thh{n}(A^\wedge_{P_i})$, and so additively, it is a product of
Eilenberg-Mac\,Lane spectra.  Any shifted copy $H(A/P_i^{k_i})$,
$k_i\geq 
1$, that we have in $\thh{n}(A^\wedge_{P_i})$ will yield two
correspondingly shifted copies of $H(A/P_i)$ (one with the same shift,
one with that shift plus one) in $\thh{n}(A^\wedge_{P_i})\wedge
_{H(A^\wedge_{P_i})} H(A/P_i).$  Again  one can then read off the rank
of the $P_i$-torsion from $\thh{n}(A^\wedge_{P_i},A/{P_i})$, and to
understand what the torsion actually is, one would need to look at
Bockstein-type operators associated with multiplication by a
uniformizer of $A^\wedge_{P_i}$.

\section{Identifying square zero extensions}
Let $k$ be a commutative ring, and let $Hk$ be the associated
Eilenberg-Mac~Lane commutative ring spectrum.
We will show that there is exactly one homotopy type of augmented
commutative $Hk$-algebras $C$ with homotopy $\pi_*C\cong\Lambda_k(x)$
where $x$ is a generator in a given positive degree.  That is, there
is a chain of stable equivalences of augmented commutative
$Hk$-algebras $C\simeq Hk\vee\Sigma^mHk$ where the $Hk$-module
$\Sigma^mHk$ is the $m$-fold suspension of $Hk$ and $Hk\vee\Sigma^mHk$
is the square-zero extension of $Hk$ by  $\Sigma^mHk$.

We learned of such a fact  when $k=\F_p$ from Michael Mandell who
proves it by means of topological Andr\'e-Quillen homology and uses it
in a program joint with Maria Basterra.

\begin{prop}\label{square0exterior}
Let $C$ be a commutative augmented $Hk$-algebra and assume that there
is an isomorphism of graded commutative $k$-algebras
$\pi_*C\cong \Lambda_k(x)$ where $|x|=m>0$.  Then there is
a chain of stable equivalences of commutative augmented $Hk$-algebras
between $C$  and $Hk\vee\Sigma^mHk$.
\end{prop}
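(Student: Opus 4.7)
The plan is to follow the obstruction-theoretic argument attributed to Mandell in the acknowledgements, which uses topological Andr\'e--Quillen cohomology ($\TAQ$) of augmented commutative $Hk$-algebras together with a Postnikov tower argument.

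First I would observe that as an $Hk$-module, $C\simeq Hk\vee\Sigma^m Hk$. The augmentation $\epsilon\colon C\to Hk$ has the unit $\eta\colon Hk\to C$ as a section of commutative $Hk$-algebras, so in particular $\epsilon$ admits an $Hk$-module section and $C\simeq Hk\vee I$ as $Hk$-modules, where $I=\mathrm{fib}(\epsilon)$ is the augmentation ideal. By the hypothesis on $\pi_*C$, the homotopy $\pi_*I$ equals $k$ in degree $m$ and is zero otherwise, so $I\simeq\Sigma^m Hk$ and the underlying $Hk$-module of $C$ is $Hk\vee\Sigma^m Hk$.

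The substance of the proof is to upgrade this to an equivalence of augmented commutative $Hk$-algebras. I would view $C$ as sitting in its Postnikov tower in the category of augmented commutative $Hk$-algebras. Since $\pi_*C$ is concentrated in degrees $0$ and $m$, only one stage of this tower is nontrivial, namely the extension $\tau_{\leq m}C = C \to \tau_{\leq m-1}C = Hk$, and by the Basterra--Mandell theory this exhibits $C$ as a derived square-zero extension of $Hk$ by $\Sigma^m Hk$. Such derived square-zero extensions are classified up to equivalence by a $\TAQ$-cohomology group built from the relative cotangent complex $L_{Hk/Hk}$, which is zero; hence the classification group vanishes, and any two such extensions are equivalent. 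Since the trivial square-zero extension $Hk\vee\Sigma^m Hk$ is one such, we conclude $C\simeq Hk\vee\Sigma^m Hk$ as augmented commutative $Hk$-algebras.

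The main obstacle I expect is in packaging the Postnikov-tower / $\TAQ$ classification cleanly for our setting. The homotopy-level statement $x^2=0$ is automatic because $\pi_{2m}C=0$ (and here one really needs $m>0$), but promoting this to a trivialization of the $E_\infty$-multiplication on the augmentation ideal requires controlling all higher coherences; that is precisely what the $\TAQ$ obstruction theory does for us. A variant, if one prefers a more hands-on presentation, is to construct an equivalence $C\to Hk\vee\Sigma^m Hk$ one stage at a time along the Postnikov tower, with each successive lifting obstruction landing in a $\TAQ$-group of $Hk$ over $Hk$; in either formulation the essential input is the vanishing of $L_{Hk/Hk}$.
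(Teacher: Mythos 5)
Your argument is correct in substance, but it is not the paper's argument: it is essentially the $\TAQ$-theoretic proof that the acknowledgements attribute to Mandell, whereas the paper deliberately gives a more hands-on replacement. You split $C\simeq Hk\vee I$ using the unit section, identify $C=\tau_{\leq m}C\to\tau_{\leq m-1}C\simeq Hk$ as a square-zero extension by $\Sigma^{m}Hk$ via the Postnikov tower in augmented commutative $Hk$-algebras, and trivialize it because the $k$-invariant is a derivation of $Hk$ over $Hk$ with values in $\Sigma^{m+1}Hk$, and $\TAQ(Hk/Hk)\simeq 0$. This works, but the load-bearing input is the theorem that Postnikov stages of connective commutative $Hk$-algebras are square-zero extensions classified by $k$-invariants in $\TAQ$-cohomology; you should cite it explicitly (Basterra \cite[\S 8]{Ba}, or Basterra--Mandell), since without it the step ``this exhibits $C$ as a derived square-zero extension'' is exactly the point at issue. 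The paper's own proof avoids this classification entirely: it represents $x$ by an $Hk$-module map $M\to C$ with $M$ a positively cofibrant model of $\Sigma^{m}Hk$, extends to the free commutative algebra $P_{Hk}(M)\to C$, notes that the symmetric powers $(M^{\smsh_{Hk}n})_{\Sigma_n}\simeq (M^{\smsh_{Hk}n})_{h\Sigma_n}$ are $(mn-1)$-connected so that this map is $(m+1)$-connected, and then takes the $m$-th Postnikov section $P_m$ of $P_{Hk}(M)$ in commutative $Hk$-algebras (again \cite[\S 8]{Ba}); the induced map $P_m\to C$ is a stable equivalence, and running the same construction against a fibrant model of $Hk\vee\Sigma^{m}Hk$ produces the chain of equivalences through the common model $P_m$. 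So both proofs use Postnikov sections in commutative $Hk$-algebras, but the paper trades the obstruction-theoretic classification and the computation of $\TAQ$ for an elementary connectivity estimate in the positive model structure, while your route is more conceptual, explains why no exotic multiplicative structure can occur, and generalizes readily; either way the equivalences should be checked to be compatible with the augmentations, which in both arguments follows because all maps involved induce the canonical identification on $\pi_0$-truncations.
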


\begin{proof}
For concreteness, we formulate the proof in symmetric spectra.  Let
$S$ be the sphere spectrum, and $P_S$ and $P_{Hk}$ the free
commutative algebra functors (adjoint to the forgetful functor with
values in $S$ or $Hk$-modules).
We may assume that $C$ is fibrant in the positive $Hk$-model
structure of Shipley \cite{Shipley}.   
Let $M$ be a positively
$Hk$-cofibrant resolution of
$\Sigma^mHk$; for concreteness $M=Hk\smsh F_1(\ess^{m+1})$, where $F_1$ is
the adjoint to the evaluation on the first level.  Represent $x$ by
an $Hk$-module morphism $M\to C$.  Now,
\[ P_{Hk}(M)=\bigvee_{n\geq 0}(M^{\smsh_{Hk} n})_{\Sigma_n}
\cong Hk\smsh P_{S}(F_1(\ess^{m+1})) \]
is the free commutative $Hk$-algebra on $M$.
Since
\[(F_1(\ess^{m+1})^{\smsh n})_{\Sigma_n}\simeq
(F_1(\ess^{m+1})^{\smsh n})_{h\Sigma_n} \]
is $(mn-1)$-connected and $m>0$, the induced
map  $f\colon P_{Hk}(M)\to C$ is  $2m\geq (m+1)$-connected.

Taking the $m$th Postnikov section $P_m$ of $P_{Hk}(M)$ in the setting
of commutative $Hk$-algebra spectra (as done in the \cite{EKMM}
setting in \cite[\S 8]{Ba})
gives a map of commutative $Hk$-algebra spectra
$P_m\to C,$
which is an isomorphism on the non-zero homotopy groups in degree $0$ and
$m$. As both spectra are semistable \cite[4.48]{schwede}, this map is a
stable equivalence of symmetric spectra.
Hence, $C$ and $P_m$ are of the same stable homotopy type, and
repeating the argument with a fibrant model for $Hk\vee\Sigma^mHk$ we
get the promised chain of stable equivalences connecting $C$ and
$Hk\vee\Sigma^mHk$.
%
\end{proof}

\section{The calculation of $\THH^{[n]}_*(\Z,\F_p)$}
Our goal in this section is to prove

\begin{theo} \label{thm:thhnhz}
  Let $n\geq1$ and $p$ be any prime.  Then \[\thh{n}_*(\Z,\F_p)\cong
    B_{\F_p}^{n}(x)\otimes B_{\F_p}^{n+1}(y)\]
  where $|x|=2p$ and
  $|y|=2p-2$.
\end{theo}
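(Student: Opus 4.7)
The plan is to induct on $n$. The base case $n=1$ is B\"okstedt's computation $\thh{1}_*(\Z,\F_p)\cong P_{\F_p}(x_{2p})\otimes \Lambda_{\F_p}(y'_{2p-1})$; setting $|y|=2p-2$, this is exactly $B^1_{\F_p}(x)\otimes B^2_{\F_p}(y)$ in the notation of Definition~\ref{defn:Bn}.

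For the inductive step I assume $\thh{n-1}_*(\Z,\F_p)\cong B^{n-1}(x)\otimes B^n(y)$ and exploit the iterative description of higher $\THH$: since $H\Z$ is commutative, $\thh{n}(H\Z)\simeq \thh{1}(\thh{n-1}(H\Z))$ as commutative ring spectra, so $\thh{n}(\Z,\F_p)=H\F_p\wedge_{H\Z}\thh{n}(H\Z)$ is accessible through a bar- or B\"okstedt-type spectral sequence whose $E^2$-page is built from $\pi_*\thh{n-1}(\Z,\F_p)$. By the inductive hypothesis the relevant input is $B^{n-1}(x)\otimes B^n(y)$; each factor $B^k(z)$ is a commutative Hopf algebra and Cartan's calculations \cite{C} identify $\Tor^{B^k(z)}(\F_p,\F_p)\cong B^{k+1}(z)$, so a K\"unneth argument produces the algebra $B^n(x)\otimes B^{n+1}(y)$ as the expected output.

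The key technical tool for turning this algebraic computation into the theorem is the Postnikov tower of $\thh{n-1}(\Z,\F_p)$ as an augmented commutative $H\F_p$-algebra, together with Proposition~\ref{square0exterior}. Each exterior tensor factor $\Lambda_{\F_p}$ that appears in the $B^k$ structure is uniquely realized by Proposition~\ref{square0exterior} as a square-zero extension $H\F_p\vee\Sigma^m H\F_p$; this pins down the homotopy type of the key Postnikov layers and so forces the differentials and extensions in the spectral sequence. The polynomial and divided-power factors are then handled stage by stage using the facts that $\thh{1}$ of a free commutative $H\F_p$-algebra yields an exterior algebra (via $\Tor^P\cong\Lambda$) and $\thh{1}$ of a square-zero extension yields a divided-power algebra (via $\Tor^{\Lambda}\cong\Gamma$), while higher stages are controlled by the analogous Cartan-type Tor identities.

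The hardest step will be the bookkeeping of the Postnikov tower for large $n$: the algebras $B^{n-1}(x)\otimes B^n(y)$ carry rich multiplicative structure (divided powers of divided powers, and so on), and identifying every Postnikov layer with a specific square-zero or free building block so that Proposition~\ref{square0exterior} (or an analogous rigidity statement at higher stages) applies is where most of the real work lies. Once that identification is in hand, the collapse of the spectral sequence and the triviality of the multiplicative extensions follow, and one reads off $\thh{n}_*(\Z,\F_p)\cong B^n(x)\otimes B^{n+1}(y)$ with $|x|=2p$ and $|y|=2p-2$, as claimed.
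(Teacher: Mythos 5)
There is a genuine gap at exactly the point you flag as ``the hardest step,'' and it is not a bookkeeping issue. Proposition~\ref{square0exterior} rigidifies only augmented commutative $H\F_p$-algebras whose \emph{entire} homotopy is an exterior algebra on a single generator. For $n\geq 2$ the algebras you need to control, $\thh{n}_*(\Z,\F_p)\cong B^{n}_{\F_p}(x)\otimes B^{n+1}_{\F_p}(y)$, are tensor products of divided-power-type algebras with many generators, and no ``analogous rigidity statement at higher stages'' is proved in the paper or supplied by you; there is no meaningful way to chop the Postnikov tower of such an object into square-zero pieces each governed by Proposition~\ref{square0exterior}. Consequently the claim that ``the collapse of the spectral sequence and the triviality of the multiplicative extensions follow'' at every inductive stage is precisely what must be proved and is not: the bar spectral sequence $\Tor^{\thh{n-1}_*(\Z,\F_p)}_{*,*}(\F_p,\F_p)\Rightarrow\thh{n}_*(\Z,\F_p)$ has no a priori reason to degenerate multiplicatively once the input is no longer exterior or polynomial on one generator, and controlling it directly for all $n$ is exactly the difficulty the paper is designed to avoid.

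What the paper does instead is to use Proposition~\ref{square0exterior} only twice, at the bottom of the tower: the $(2p-1)$st Postnikov section of $\THH(\Z,\F_p)$ is identified with $E(z)=H\F_p\vee\Sigma^{2p-1}H\F_p$, and the pushout term with $E(y)=H\F_p\vee\Sigma^{2p+1}H\F_p$. The decisive extra ingredient, absent from your proposal, is Lemma~\ref{lem:factor}: the $E(z)$-algebra structure on $E(y)$ factors through the augmentation. This allows the iterated bar construction computing $\thh{n+1}(\Z,\F_p)$ to be split as $B^{(n)}_{H\F_p}(H\F_p,E(z),H\F_p)\wedge_{H\F_p}B^{(n-1)}_{H\F_p}(H\F_p,\underline{E(y)},H\F_p)$, and then each square-zero factor is replaced by the strictly commutative simplicial $\F_p$-algebra model $\F_p(\Delta_m/\partial\Delta_m)$. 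After normalization the homotopy groups are literally the homology of the \emph{algebraic} iterated bar constructions, computed in \cite{wit} to be $B^{n+2}_{\F_p}(y)$ and $B^{n+1}_{\F_p}(x)$; no collapse or extension arguments are needed for $n\geq 2$. So while your base case, the iterative description, and the appeal to Proposition~\ref{square0exterior} match the paper's opening moves, the engine of the induction (factorization, splitting, and strictification) is missing, and the route you propose in its place does not go through as stated.
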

To this end we use the iterative description of $\THH^{[n]}$: the
$n$-sphere $\mathbb{S}^n$ can be decomposed into two hemispheres whose
intersection is the equator, $\mathbb{S}^n = \mathbb{D}^n
\cup_{\mathbb{S}^{n-1}} \mathbb{D}^n$. This decomposition yields
  \cite{Ve} that
\begin{equation} \label{eq:decomp}
\THH^{[n]}(\Z,\F_p) \simeq H\F_p \wedge^L_{\THH^{[n-1]}(\Z,\F_p)}
H\F_p
\end{equation}
where $\wedge^L$ denotes the derived smash product.

Note that for any commutative ring spectrum $R$, $\THH(R)$ is a
commutative $R$-algebra
spectrum \cite[IX.2.2]{EKMM}, so in particular, $\THH(\Z)$ is a
commutative $H\Z$-algebra spectrum.
As $\THH(\Z,\F_p) = \THH(\Z) \wedge_{H\Z} H\F_p$, we have a
commutative $H\F_p$-algebra structure
on $\THH(\Z,\F_p)$ and the multiplication on $H\Z$ and the
$H\Z$-module structure of $H\F_p$ give rise
to a canonical augmentation from $\THH(\Z,\F_p)$ to $H\F_p$.
Thus $\THH(\Z,\F_p)$ is a commutative augmented $H\F_p$-algebra spectrum.

B\"okstedt \cite{boe} calculated $\THH_*(\Z)$ and his result gives the
$n=1$ case of the theorem,
\[ \THH_*(\Z,\F_p) \cong \F_p[x_{2p}] \otimes \Lambda(z_{2p-1}) \]
since $B^2_{\F_p}(y_{2p-2})$ is isomorphic to $\Lambda(z_{2p-1})$.
We use the $(2p-1)$st Postnikov section of commutative augmented
$H\F_p$-algebras  to map  $ \THH_*(\Z,\F_p)$ to something which by
Proposition~\ref{square0exterior}  has to be weakly equivalent to
$H\F_p \vee \Sigma^{2p-1}H\F_p$.
Then we consider the homotopy pushout diagram in the category of
commutative augmented $H\F_p$-algebras
\[ \xymatrix{
{\THH(\Z,\F_p)} \ar[r] \ar[d] & {H\F_p \vee \Sigma^{2p-1}H\F_p}\ar[d]^f \\
{H\F_p} \ar[r]& {(H\F_p \vee \Sigma^{2p-1}H\F_p) \wedge^L_{\THH(\Z,\F_p)}
H\F_p.}
}\] 
A bar spectral sequence argument tells us that the homotopy groups of
$(H\F_p \vee \Sigma^{2p-1}H\F_p) \wedge^L_{\THH(\Z,\F_p)}
H\F_p$  are isomorphic to
$\Lambda (y_{2p+1})$
and using Proposition~\ref{square0exterior}
again we see that the homotopy pushout is a  commutative augmented
$H\F_p$-algebra which is weakly equivalent  to $H\F_p \vee \Sigma^{2p+1} H\F_p$.

The published version of this article unfortunately contains a mistake in the
proof of Lemma 3.2 there.  A variant of the proof which we present below was
known to us at the time, but we sadly submitted a ``simplification''.
The former Lemma 3.2 
states (correctly) that the map 
\[ f \colon  H\F_p \vee \Sigma^{2p-1} H\F_p \to H\F_p \vee \Sigma^{2p+1} H\F_p\]
of augmented commutative $H\F_p$-algebras from above factors in the homotopy
category
through the augmentation $H\F_p \vee \Sigma^{2p-1} H\F_p \to H \F_p $, a fact
which is a special case of the following result:

\begin{lemma} \label{lem:degrees}
  Let $k$ be a discrete commutative ring, and let $a\neq b$ be two positive
  integers such that $b\leq 2a$.  Then any map in the category of commutative
  augmented $Hk$-algebras between the 
  square-zero extensions
\[Hk\vee \Sigma^a Hk \to Hk\vee \Sigma^b Hk \] 
factors in the homotopy category of commutative augmented $Hk$-algebras
through the augmentation $Hk\vee \Sigma^a Hk
\to Hk$.
\end{lemma}
We use the notation of Basterra \cite{Ba}.  For  a commutative
$\mathbb{S}$-algebra $R$, let $\calM_R$ denote the category of $R$-modules,
$\calN_R$ denote the category of non-unital commutative $R$-algebras, and
$\calC_{R/R}$ denote the category of commutative augmented $R$-algebras.
Consider the adjunctions
\[ \xymatrix{\calN_R\ar@<0.5ex>[r]^Q&\ar@<0.5ex>[l]^Z\calM_R\ar@<0.5ex>[r]^A&\ar@<0.5ex>[l]^U\calN_R\ar@<0.5ex>[r]^K&\ar@<0.5ex>[l]^I\calC_{R/R},}
\] 
where $U$ is the forgetful functor with left adjoint $A$, sending an
$R$-module $M$ to the free non-unital commutative algebra on $M$, 
$UAM=\bigvee_{j>0} M^{\wedge_R j}/\Sigma_j$
with multiplication induced by concatenation; $Z$ is the square zero extension
making an $R$-module $M$ into a nonunital commutative $R$-algebra $ZM$ using
the trivial multiplication map sending everything to the basepoint, with left
adjoint the indecomposables $Q$, sending a non-unital $R$-algebra to the
pushout $QC$ of
$\xymatrix{{*}&\ar[l]U(C\wedge_RC)\ar[r]^-{\text{mult.}}&UC;}$ 
and, finally,
$K$ is given by $K(M)=R\vee M$, with multiplication using $R$'s
multiplication, $M$'s multiplication, and $M$'s $R$-module structure, with
right adjoint the augmentation ideal $I$, sending an augmented commutative
$R$-algebra $B$ to the pullback $I(B)$ of
$\xymatrix{{*}\ar[r]&R&\ar[l]_{\text{augm.}}B.}$

If $C$ is a $q$-cofibrant commutative $R$-algebra and $N\in\calM_R$, then we get
that the canonical equivalence $ZN\we IKZN$ induces an equivalence
\[ \calC_{R/R}(KC,KZN)\cong \calN_R(C,IKZN)\ew\calN_R(C,ZN)\cong\calM_R(QC,N).
\] 

\begin{proof}[Proof of Lemma~\ref{lem:degrees}]
  In the special case of the lemma, letting $R$ be $Hk$ and $C\we Z\Sigma^aR$
  a $q$-cofibrant replacement in $\calN_R$, 
we must show that the mapping space $\calM_R(QC,\Sigma^bR)$
is connected. 
Noting that $a\neq b$ implies that any map from $\Sigma^aR\simeq UC$ to
$\Sigma^bR$ is null\-homotopic, we are done once we show the following more
general Proposition~\ref{prop:moregeneral}.
\end{proof}
In what follows $R$ is a connective  $q$-cofibrant commutative $S$-algebra.
\begin{prop}
  \label{prop:moregeneral}
  Let $C\in\calN_R$ have $\pi_\ell UC=0$ for $\ell<m$, let $N\in \calM_R$
  have $\pi_\ell(N)=0$ for $\ell>2m$ and let $f\colon\ C\to ZN$ in the homotopy
  category $ho\calN_R$ of non-unital commutative $R$-algebras be such that the
  underlying $R$-module map $Uf\colon\ UC\to UZN= N$ is nullhomotopic.
  Then $f$ is nullhomotopic.

  In other words (using the isomorphism $\calN_R(C,ZN)\cong\calM_R(QC,N)$),
  the map $UC\to QC$ induces an injection $ho\calM_R(QC,N)\to ho\calM_R(UC,N)$.
\end{prop}
Letting $LC$ be the homotopy cofiber of the map $UC\to QC$, we see that it is
enough to prove that $\calM_R(LC,N)$ is connected, which is immediate from the
two following lemmas (with $X=LC$ and $n=2m$).
\begin{lemma}
  Let $X$ be a $q$-cofibrant $R$-module with $\pi_\ell X=0$ for $\ell\leq n$
  and $N$ an $R$-module with $\pi_\ell N=0$ for $\ell>n$.  Then $\calM_R(X,N)$
  is connected.
\end{lemma}
\begin{proof}
  Since $R$ is connective, we can build $X$ by $R$-cells of dimension $\ell> n$
  and $\calM_R(X,N)$ is the limit of a tower of fibrations with connected
  fibers of the form $\calM_R(\Sigma^\ell R,N)\simeq\Sigma^{-\ell} N$.
\end{proof}

\begin{lemma}
  Let $C$ be a $q$-cofibrant non-unital commutative $R$-algebra such that
  $\pi_\ell UC=0$ for $\ell<m$.  Then the canonical map $UC\to QC$ is
  $2m$-connected.
 \end{lemma}
 \begin{proof}
Let $X\we UC$ be a $q$-cofibrant replacement in $\calM_R$.  Then, by induction 
on \cite[5.3]{Ba}, the induced map $(UA)^tX\to(UA)^tUC\cong Q(AU)^{t+1}C$ is
an equivalence and by \cite[III.5.1]{EKMM} the counit $X\to (AU)^tX$ is
$2m$-connected, so that the canonical map $UC\to |[t]\mapsto Q(AU)^{t+1}C|
\cong Q|[t]\mapsto (AU)^{t+1}C|$ is $2m$-connected (where the simplicial
structure is induced by the adjoint pair).  Since, by  \cite[5.4]{Ba}, the map
$Q|[t]\mapsto (AU)^{t+1}C|\to QC$ is an equivalence, we are done.
 \end{proof}


 
We can therefore describe  $\THH^{[2]}(\Z,\F_p)$ by the following
iterated homotopy pushout diagram.
\[ \xymatrix{
{\THH(\Z,\F_p)} \ar[r] \ar[dd] & {H\F_p \vee \Sigma^{2p-1}H\F_p} \ar[dd]^f
\ar[rr]
\ar[dr]^{\varepsilon} & & {H\F_p} \ar'[d][dd] \ar[dr] & \\
 & &{H\F_p} \ar[dl]_{\eta} \ar[rr] & & {\Gamma} \ar[dl]\\
{H\F_p} \ar [r] & {H\F_p \vee \Sigma^{2p+1}H\F_p} \ar[rr] & &
{\THH^{[2]}(\Z,\F_p)} &
}\]
Here, $\Gamma$ denotes the homotopy pushout of the upper right
subdiagram in the category of commutative $H\F_p$-algebras, and as above
we get
\[ \Gamma = H\F_p \wedge^L_{H\F_p \vee \Sigma^{2p-1}H\F_p} H\F_p. \]
We have again a $\Tor$-spectral sequence converging to the homotopy
groups of the spectrum $\Gamma$ with 
\[E^2_{*,*} = \Tor_{*,*}^{\Lambda(z_{2p-1})}(\F_p,\F_p) \]
and hence $\pi_*(\Gamma)$ is isomorphic to a divided power algebra over
$\F_p$ on a generator in degree $2p$,
$\Gamma(a_{2p})$. In the iterated homotopy pushout diagram all maps
involved are maps of commutative $S$-algebras and thus we can identify
$\THH^{[2]}(\Z,\F_p)$ as a
commutative $H\F_p$-algebra
as
\begin{align*}
  \label{eq:thh2z}
\THH^{[2]}(\Z,\F_p) &\simeq (H\F_p \vee \Sigma^{2p+1}H\F_p)
                      \wedge^L_{H\F_p \vee \Sigma^{2p-1}H\F_p} H\F_p \\
  &\simeq
    (H\F_p \vee \Sigma^{2p+1}H\F_p)\wedge_{H\F_p}\Gamma\\
  &\cong \Gamma \vee
\Sigma^{2p+1}\Gamma.
\end{align*}
Thus, $\THH^{[2]}(\Z,\F_p)$ is equivalent to the
bar construction
\[B_{H\F_p}(H\F_p, H\F_p \vee \Sigma^{2p-1}H\F_p, H\F_p  \vee \Sigma^{2p+1}H\F_p). \]
Its homotopy groups are
\[ \THH_*^{[2]}(\Z,\F_p) \cong \Gamma(a_{2p}) \otimes \Lambda(y_{2p+1}). \]
We use this to determine higher $\THH$  via  iterated bar constructions.
 We know that
$\THH^{[n+1]}(H\Z,H\F_p)$ is equivalent to the derived smash product
\[ H\F_p \wedge^L_{\THH^{[n]}(H\Z,H\F_p)} H\F_p \]
whose homotopy groups are the ones of the bar construction
\[ B_{H\F_p}( H\F_p, \THH^{[n]}(H\Z,H\F_p), H\F_p)\] and iteratively, we can express
$\THH^{[n]}(H\Z,H\F_p)$ again in terms of such a bar construction as
long as $n$ is
greater than two. For $n=2$ we know the answer by the above
argument. For larger $n$  we can
determine the homotopy groups of $\THH^{[n+1]}(H\Z,H\F_p)$
iteratively.
Abbreviating $H\F_p \vee \Sigma^{2p-1}H\F_p$ to $E(z)$ and
$H\F_p \vee \Sigma^{2p+1}H\F_p$ to $E(y)$ we define
\[ B^{(n)} := B_{H\F_p}(H\F_p, \ldots, B_{H\F_p}(H\F_p, B_{H\F_p}(H\F_p, E(z), E(y)),H\F_p),
\ldots, H\F_p) \]
with $n-1$ pairs of outer terms of $H\F_p$.
We denote by $\underline{E(y)}$ the constant simplicial
$H\F_p$-algebra spectrum on $E(y)$.
\begin{lemma}
As $n$-simplicial commutative $H\F_p$-algebras
\[ B^{(n)} \simeq B^{(n)}_{H\F_p}(H\F_p, E(z), H\F_p) \wedge_{H\F_p}
B^{(n-1)}_{H\F_p}(H\F_p, \underline{E(y)}, H\F_p) \] 
for all $n \geq 2$.
\end{lemma}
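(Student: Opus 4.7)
The plan is to argue by induction on $n$, starting from $n=2$, using two main ingredients: Lemma~\ref{lem:factor}, which splits the innermost bar construction, and a Künneth-type splitting of the outer bar construction over $H\F_p$.

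For the base case $n=2$, I will use Lemma~\ref{lem:factor}: the map $f \colon E(z) \to E(y)$ that gives $E(y)$ its $E(z)$-module structure factors as $E(z) \xrightarrow{\varepsilon} H\F_p \xrightarrow{\eta} E(y)$. Consequently $E(y)$, viewed as an $E(z)$-module, is nothing other than $H\F_p \wedge_{H\F_p} E(y)$ with the module structure on $H\F_p$ coming from the augmentation. This identifies the simplicial augmented commutative $H\F_p$-algebra $B(H\F_p, E(z), E(y))$ with $B(H\F_p, E(z), H\F_p) \wedge_{H\F_p} E(y)$ level by level, with $E(y)$ treated as a constant simplicial object. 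Applying the outer bar construction $B(H\F_p, -, H\F_p)$ and then splitting off the $E(y)$-factor via the Künneth formula stated below yields the equivalence in the case $n=2$.

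For the inductive step I will substitute the equivalence at level $n-1$ into $B^{(n)} = B(H\F_p, B^{(n-1)}, H\F_p)$ and apply the same Künneth splitting to the outermost bar construction. This produces one additional iteration on each side of $\wedge_{H\F_p}$, matching the $n$-fold iteration on the $E(z)$-factor with the $(n-1)$-fold iteration on the $\underline{E(y)}$-factor, which is what is claimed.

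The essential technical input, and what I expect to be the main obstacle, is the Künneth-type identity
$$B(H\F_p, A \wedge_{H\F_p} B, H\F_p) \simeq B(H\F_p, A, H\F_p) \wedge_{H\F_p} B(H\F_p, B, H\F_p)$$
for augmented commutative $H\F_p$-algebras $A,B$. At the level of homotopy types this is the standard identity $H\F_p \wedge^L_{A \wedge_{H\F_p} B} H\F_p \simeq (H\F_p \wedge^L_A H\F_p) \wedge_{H\F_p} (H\F_p \wedge^L_B H\F_p)$, which holds because $A \wedge_{H\F_p} B$ is the coproduct of $A$ and $B$ in augmented commutative $H\F_p$-algebras, so a pushout-of-pushouts computation applies. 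The real work is upgrading this to an equivalence of multi-simplicial augmented commutative $H\F_p$-algebras that can be iterated coherently through the outer bar constructions, and carefully bookkeeping the simplicial directions so that the constant simplicial object $\underline{E(y)}$ fills in the missing degree on the $E(y)$-side. Once the Künneth splitting is set up at this level of structure, the induction goes through cleanly.
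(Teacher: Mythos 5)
Your proposal is correct and follows essentially the same route as the paper: the paper likewise uses Lemma~\ref{lem:factor} to split the innermost bar construction as $B_{H\F_p}(H\F_p,E(z),H\F_p)\wedge_{H\F_p}\underline{E(y)}$ and then invokes the splitting $B_R(R,X\wedge_R Y,R)\simeq B_R(R,X,R)\wedge_R B_R(R,Y,R)$ for commutative simplicial $R$-algebras, inducting on $n$ exactly as you describe. The Künneth step you flag as the main obstacle is in fact a degreewise identification (commutativity lets one shuffle the smash factors in each simplicial degree, compatibly with faces and degeneracies), which is why the paper cites it without further elaboration.
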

\begin{proof}
We show the claim directly for $n=2$: $B^{(2)}$ is
\[ B_{H\F_p}(H\F_p, B_{H\F_p}(H\F_p, E(z), E(y)), H\F_p). \] 
As we know from Lemma~\ref{lem:degrees} that the $E(z)$-module
structure of $E(y)$ factors via the augmentation map through the
$H\F_p$-module structure of $E(y)$, we get that
$B_{H\F_p}(H\F_p, E(z), E(y))$ can be split  as an augmented
simplicial commutative
$H\F_p$-algebra as $B_{H\F_p}(H\F_p, E(z),
H\F_p) \wedge_{H\F_p} \underline{E(y)}$ and thus we get a weak
equivalence of bisimplicial commutative $H\F_p$-algebra spectra:
\begin{align*}
 &\quad\ B_{H\F_p}(H\F_p, B_{H\F_p}(H\F_p, E(z), E(y)), H\F_p) \\
	&\simeq  B_{H\F_p}(H\F_p, B_{H\F_p}(H\F_p, E(z),
H\F_p) \wedge_{H\F_p} \underline{E(y)}, H\F_p) \\
	&\simeq  B_{H\F_p}(H\F_p, B_{H\F_p}(H\F_p, E(z),
H\F_p), H\F_p)  \wedge_{H\F_p} B_{H\F_p}(H\F_p, \underline{E(y)}, H\F_p)\\
	&=  B^{(2)}_{H\F_p}(H\F_p, E(z), H\F_p) \wedge_{H\F_p} B_{H\F_p}(H\F_p,
\underline{E(y)}, H\F_p).
\end{align*}
For the second weak equivalence in the chain above, we have used that the bar
construction 
$B_R(R, X \!\wedge_R Y, R)$ of the smash product of two commutative
simplicial $R$-algebra spectra $X$ and $Y$ is equivalent as a
bisimplicial commutative $R$-algebra to $B_R(R,X,R) \wedge_R
B(R,Y,R)$.

By induction we assume that $n$ is bigger than $2$ and that we know
the result for all $k < n$. Then
\begin{align*}
 B^{(n)} & = B_{H\F_p} (H\F_p, B^{(n-1)}, H\F_p) \\
 & \simeq B_{H\F_p}(H\F_p, B^{(n-1)}_{H\F_p}(H\F_p, E(z), H\F_p)\\
&\quad \wedge_{H\F_p}
 B^{(n-2)}_{H\F_p}(H\F_p, \underline{E(y)}, H\F_p), H\F_p) \\
 &  \simeq B_{H\F_p}(H\F_p, B^{(n-1)}_{H\F_p}(H\F_p, E(z), H\F_p) , H\F_p)
\\
&\quad \wedge_{H\F_p} 
 B_{H\F_p}(H\F_p, B^{(n-2)}_{H\F_p}(H\F_p, \underline{E(y)},
 H\F_p), H\F_p) \\
 & = B^{(n)}_{H\F_p}(H\F_p, E(z), H\F_p)
 \wedge_{H\F_p} B^{(n-1)}_{H\F_p}(H\F_p, \underline{E(y)}, H\F_p).
\end{align*}
\end{proof}
We view $\THH^{(n)}(H\Z, H\F_p)$ as a simplicial commutative $H\F_p$-algebra
for all $n\geq 1$ and therefore describe $\THH^{(n+1)}(H\Z, H\F_p)$
as the diagonal of the bar construction
$B_{H\F_p}( H\F_p, \THH^{[n]}(H\Z,H\F_p), H\F_p)$.
\begin{cor}
We obtain, that
\begin{align*}
	&\quad\ \THH^{(n+1)}_*  (H\Z, H\F_p)  \\ 
&\cong \pi_*\text{diag} B^{(n)}_{H\F_p}(H\F_p, E(z),
H\F_p) \otimes_{\F_p} \pi_* \text{diag} B^{(n-1)}_{H\F_p}(H\F_p,
\underline{E(y)}, H\F_p).
\end{align*}
\end{cor}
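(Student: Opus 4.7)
The plan is to assemble the corollary from the Lemma just proved, the iterative bar-construction description of higher $\THH$, and a K\"unneth argument over the field $\F_p$. First, I will iterate equivalence~\eqref{eq:decomp}, starting from the identification $\THH^{[2]}(H\Z,H\F_p)\simeq B_{H\F_p}(H\F_p,E(z),E(y))=B^{(1)}$ already established above, to realize $\THH^{[n+1]}(H\Z,H\F_p)$ as the diagonal of the $n$-simplicial commutative $H\F_p$-algebra $B^{(n)}$. At each stage one uses that $\THH^{[k+1]}(H\Z,H\F_p)$ is modeled by the simplicial bar construction $B_{H\F_p}(H\F_p,\THH^{[k]}(H\Z,H\F_p),H\F_p)$ and that the iterated derived smash product is computed by the diagonal of the resulting multisimplicial spectrum.

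Second, I will apply the Lemma to split
$$B^{(n)}\simeq B^{(n)}_{H\F_p}(H\F_p,E(z),H\F_p)\wedge_{H\F_p}B^{(n-1)}_{H\F_p}(H\F_p,\underline{E(y)},H\F_p),$$
and observe that since $\wedge_{H\F_p}$ is performed levelwise on multisimplicial spectra, taking the diagonal commutes with it. This yields a weak equivalence of simplicial commutative $H\F_p$-algebras between $\THH^{[n+1]}(H\Z,H\F_p)$ and the smash product over $H\F_p$ of the diagonals of the two factors on the right.

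Finally, to pass from the smash product to a tensor product of homotopy groups I will invoke the K\"unneth isomorphism over the field $\F_p$: for any $H\F_p$-module spectra $X$ and $Y$, $\pi_*(X\wedge_{H\F_p}Y)\cong \pi_*X\otimes_{\F_p}\pi_*Y$, since graded $\F_p$-vector spaces are flat and all higher $\Tor$ terms vanish. Composing the three steps yields the claimed formula. The main obstacle is essentially bookkeeping: one must verify that the nested bar construction $B^{(n)}$ is indeed a simplicial model of $\THH^{[n+1]}(H\Z,H\F_p)$ and that the diagonal of a multisimplicial smash product over $H\F_p$ is the smash product of the diagonals. No substantive new computation beyond the content of the Lemma is required.
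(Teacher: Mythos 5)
Your proposal is correct and follows essentially the same route the paper intends: the corollary is exactly the combination of the iterated bar-construction model for $\THH^{[n+1]}(H\Z,H\F_p)$, the splitting of $B^{(n)}$ from the Lemma (with diagonal commuting with the levelwise smash over $H\F_p$), and the K\"unneth isomorphism over the field $\F_p$. The paper leaves these steps implicit, so your spelled-out bookkeeping is a faithful reconstruction rather than a different argument.
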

For sake of definiteness in the following we will work in the category
of symmetric spectra in simplicial sets, $\Sp^\Sigma$ \cite{hss}.
The Eilenberg-Mac\,Lane spectrum gives rise to a functor
\[ H\colon \sab \ra \Sp^\Sigma \]
such that $HA(n) = \mathrm{diag}(A\otimes \tilde{\Z}(S^n))$ where
$S^n = (S^1)^{\wedge n}$ and  $\tilde{\Z}(-)$ denotes the free
abelian group generated by all non-basepoint elements. This functor is lax
symmetric monoidal \cite[2.7,3.11]{schwede}. A square-zero extension
$H\F_p \vee \Sigma^n H\F_p$ (for $n\geq 1$) can be modelled
by $\F_p(\Delta_n/\partial \Delta_n)$:

\begin{lemma}
There is a stable equivalence of commutative symmetric ring spectra $\psi\colon
H\F_p(\Delta_n/\partial \Delta_n) \ra H\F_p \vee \Sigma^n H\F_p$.
\end{lemma}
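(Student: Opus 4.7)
The plan is to identify $H\F_p(\Delta_n/\partial\Delta_n)$ as a commutative augmented $H\F_p$-algebra spectrum whose homotopy is the exterior algebra $\Lambda_{\F_p}(x)$ on a generator in degree $n$, and then invoke Proposition~\ref{square0exterior} to produce $\psi$.

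First I would unwind the simplicial commutative $\F_p$-algebra structure on $\F_p(\Delta_n/\partial\Delta_n)$: levelwise it is the free $\F_p$-module on the simplices of $\Delta_n/\partial\Delta_n$, splitting canonically as $\F_p\oplus \tilde{\F}_p(\Delta_n/\partial\Delta_n)$, where the basepoint class generates the unit summand and the reduced part is a square-zero ideal, with the evident augmentation $\F_p(\Delta_n/\partial\Delta_n)\to\F_p$ projecting onto the unit summand. Because the Eilenberg-Mac\,Lane functor $H\colon\sab\to\Sp^\Sigma$ is lax symmetric monoidal, applying $H$ yields a commutative augmented $H\F_p$-algebra spectrum.

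Next I would compute its homotopy. The pointed simplicial set $\Delta_n/\partial\Delta_n$ is a model for $S^n$, so $\tilde{H}_*(\Delta_n/\partial\Delta_n;\F_p)$ is $\F_p$ concentrated in degree $n$, and additively $\pi_* H\F_p(\Delta_n/\partial\Delta_n)$ is $\F_p$ in degrees $0$ and $n$ and zero elsewhere. The multiplicative structure is forced to be square-zero, either directly from the construction or for degree reasons since $2n>n$ when $n\geq 1$, giving $\pi_* H\F_p(\Delta_n/\partial\Delta_n)\cong \Lambda_{\F_p}(x)$ with $|x|=n$. Proposition~\ref{square0exterior} then produces a chain of stable equivalences of commutative augmented $H\F_p$-algebras between $H\F_p(\Delta_n/\partial\Delta_n)$ and $H\F_p\vee\Sigma^n H\F_p$, and $\psi$ is the resulting morphism in the homotopy category.

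The only real technical point is to set up the simplicial commutative $\F_p$-algebra structure on $\F_p(\Delta_n/\partial\Delta_n)$ carefully enough that $H$ lands in commutative augmented $H\F_p$-algebra spectra in a form to which Proposition~\ref{square0exterior} applies; once that bookkeeping is done, the lemma reduces essentially to the earlier structural result.
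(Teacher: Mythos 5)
Your argument is correct in substance, but it takes a genuinely different route from the paper. The paper does not invoke Proposition~\ref{square0exterior} here: it writes down an explicit map $\psi$ levelwise on the symmetric spectrum, using that the only nondegenerate simplices of $\Delta_n/\partial\Delta_n$ are the basepoint and $\id_{[n]}$ (so every simplex is an iterated degeneracy of one of these), checks by hand that $\psi$ is well defined and multiplicative, and then uses that both spectra have finite stable homotopy groups, hence are semistable, so it suffices to check that $\psi$ induces an isomorphism on stable homotopy (both sides being $\Lambda_{\F_p}$ on a degree-$n$ class, with $1\mapsto 1$ and generator to generator). Your route instead verifies the hypotheses of Proposition~\ref{square0exterior}: that $H\F_p(\Delta_n/\partial\Delta_n)$ is a commutative augmented $H\F_p$-algebra (via lax symmetric monoidality of $H$ and the collapse map to the basepoint) with homotopy $\Lambda_{\F_p}(x)$, $|x|=n$, the square-zero/degree argument for $x^2=0$ being fine for $n\geq 1$. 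What each approach buys: yours is shorter and reuses the structural result, and it delivers exactly what the rest of the paper actually needs, namely that $\F_p(\Delta_n/\partial\Delta_n)$ models the square-zero extension up to equivalence of augmented commutative $H\F_p$-algebras; the paper's construction is more elementary (no model-structure or Postnikov input) and produces a single explicit multiplicative map, which is what the lemma literally asserts. That last point is the one caveat you should acknowledge: Proposition~\ref{square0exterior} only yields a chain of stable equivalences, so your argument gives an isomorphism in the homotopy category of commutative augmented $H\F_p$-algebras rather than a direct stable equivalence $\psi$; this is harmless for the applications in Section~3, but if you want the statement as written you should either construct the map directly (as the paper does) or note why a zig-zag suffices for the subsequent bar-construction comparisons.
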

\begin{proof}
There are two non-degenerate simplices in $\Delta_n/\partial
\Delta_n$: a zero-simplex $*$ corresponding to the unique basepoint and an
$n$-simplex corresponding to the identity map $\id_{[n]}$ on the set
$[n]=\{0,\ldots,n\}$. We can represent any simplex in  $\Delta_n/\partial
\Delta_n$ as $s_{i_\ell} \circ \cdots \circ s_{i_0}(*)$ or $s_{i_\ell}
\circ \cdots \circ s_{i_0}(\id_{[n]})$. We define the map $\psi(m)_\ell$ from
$H\F_p(\Delta_n/\partial \Delta_n)(m)_\ell = \F_p(\Delta_n/\partial
\Delta_n)_\ell \otimes \tilde{\Z}(S^m_\ell)$ to $(H\F_p \vee \Sigma^n
H\F_p)(m)_\ell= \F_p\otimes \tilde{\Z}(S^m_\ell) \vee \Delta_n/\partial
\Delta_n \wedge \F_p\otimes \tilde{\Z}(S^m_\ell)$
on generators by setting
\begin{align*}
\psi(s_{i_\ell} \circ \cdots \circ s_{i_0}(*) \otimes x) & = 1 \otimes
x \in \F_p\otimes \tilde{\Z}(S^m_\ell) \\
\psi(s_{i_\ell} \circ \cdots \circ s_{i_0}(\id_{[n]}) \otimes x) & =
[s_{i_\ell} \circ \cdots \circ s_{i_0}(\id_{[n]}), x] \in \Delta_n/\partial
\Delta_n \wedge \F_p\otimes \tilde{\Z}(S^m_\ell)
\end{align*}
for $x \in S^m_\ell$ and by extending it in a bilinear manner. This
map is well-defined and multiplicative. Both spectra have finite
stable homotopy groups and are therefore semistable. It thus suffices
to show that $\psi$ is a stable homotopy equivalence. The
stable homotopy groups on both sides are exterior algebras on a generator
in degree $n$ and $\psi$ induces the map on stable homotopy groups that sends
$1$ to $1$ and maps the generator in degree $n$ to a degree $n$ generator.
\end{proof}

\medskip
We have a weak equivalence
\[ B_{H\F_p}(H\F_p, HA, HC) \ra  H(B_{\F_p}(\F_p, A, C)) \] 
for all simplicial $\F_p$-algebras $A$ and $C$.
Let $N$ denote the normalization functor from simplicial $\F_p$-vector
spaces
to non-negatively graded chain complexes over $\F_p$. This is a lax symmetric
monoidal functor, so it sends simplicial commutative $\F_p$-algebras to
commutative differential graded $\F_p$-algebras. Note that we obtain
isomorphisms of differential graded $\F_p$-algebras
\[  N(\F_p(\Delta_{2p-1}/\partial \Delta_{2p-1})) \cong \Lambda_{\F_p}(z),
  \qquad
N(\F_p(\Delta_{2p+1}/\partial \Delta_{2p+1})) \cong \Lambda_{\F_p}(y) \]
because for positive $n$, $\Delta_n/\partial \Delta_n$ has only a
non-degenerate zero cell and a non-degenerate $n$-cell. Note that
$B^{(n)}_{\F_p}(\F_p, \F_p(\Delta_{2p-1}/\partial \Delta_{2p-1}), \F_p)$ is
an $(n+1)$-fold
simplicial commutative $\F_p$-algebra. We can calculate the homotopy
groups of its diagonal as the homology of the total complex associated
to the bicomplex
\begin{align*}
	C_{r,s} &= N_r\text{diag}^n B^{(n)}_{\F_p}(\F_p,
N_s\F_p(\Delta_{2p-1}/\partial \Delta_{2p-1}),
              \F_p)\\
	& \cong N_r\text{diag}^n B^{(n)}_{\F_p}(\F_p,
  \F_p(\Delta_{2p-1}/\partial \Delta_{2p-1})_s, \F_p).
\end{align*}
As the differential in $s$-direction is trivial the spectral sequence
collapses at the $E^2$-term with total homology isomorphic to the
given by the homology
of the differential graded $n$-fold bar construction. These homology
groups were calculated in \cite{wit} and we obtain that
$\pi_*\text{diag}B^{(n)}_{H\F_p}(H\F_p, E(z),
H\F_p) \cong B^{n+2}_{\F_p}(y)$. Similarly,
$\pi_*\text{diag}B^{(n-1)}_{H\F_p}(H\F_p, \underline{E(y)}, H\F_p)
\cong B^{n+1}_{\F_p}(x)$. This proves Theorem~\ref{thm:thhnhz}.

\begin{remark} \label{rem:thhfp}
Note that a similar argument as above shows that
\begin{equation} \label{eq:thhfp}
\THH^{[n]}_*(\F_p) \cong B^n_{\F_p}(\mu)
\end{equation}
for all $n \geq 1$ and all primes $p$. Here the degree of $\mu$ is
two. For $n=1$ this 
calculation is due to B\"okstedt
\cite{boe}.  The only difference from the argument
above is that the first step is slightly easier: in order to calculate
$\THH^{[2]}(\F_p)$ we do not have to split off the Postnikov section --
the bar spectral sequence immediately yields that $\THH^{[2]}_*(\F_p)$
is an exterior algebra on a class in degree $3$.
By Proposition \ref{square0exterior} we know that we
can model $\THH^{[2]}(\F_p)$ as  $H\F_p \vee \Sigma^3H\F_p$. From there on
the argument is completely analogous to the one above. Maria Basterra and
Mike Mandell proved the isomorphism \eqref{eq:thhfp} for all $n$ and
$p$ in 1999 using an 
$E_\infty$ bar construction argument. Partial results about higher
$\THH$ of $\F_p$ were 
obtained by Rognes, Veen \cite{Ve} and in \cite{wit}.

\end{remark}

\section{The number ring case}

The aim of this section is to prove Theorem~\ref{numberring},
calculating the higher topological Hochschild homology of number rings
with coefficients in the residue field.
The calculation starts with the following observation.

\begin{lemma}\label{padicHH}
Let $B$ be a characteristic zero complete discrete valuation ring with
residue field $\F_q$ of characteristic $p>0$ and let $P$
denote the ideal consisting of all elements with positive valuation in $B$.
Then 
\[ \HH_1^{\Z_p}(B, B/P)\cong\begin{cases}
B/P, &\text{ if } B \text{ is ramified over } \Z_p \text{ at } P,\\
0, & \text{otherwise.}\end{cases} \] 
\end{lemma}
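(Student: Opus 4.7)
The plan is to compute the Hochschild groups directly from a small free resolution of $B$ over $B\otimes_{\Z_p}B$ that exploits the monogenic presentation of $B$ over its maximal unramified subring. Write $W=W(\F_q)\subseteq B$ for the ring of integers of the maximal unramified subextension of $\mathrm{Frac}(B)/\Q_p$; because $W/\Z_p$ is étale, $W\otimes_{\Z_p}W\cong\prod_{\sigma\in\mathrm{Gal}(\F_q/\F_p)}W$, and base change along $W\to B$ gives a product decomposition $B\otimes_{\Z_p}B\cong\prod_\sigma(B\otimes_{W,\sigma}B)$ in which the multiplication $B\otimes_{\Z_p}B\to B$ factors through the $\sigma=1$ factor. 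Both $B$ and $B/P$ are supported on that factor, so the Tor groups collapse to $\HH_*^{\Z_p}(B,B/P)\cong\HH_*^W(B,B/P)$.

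In the unramified case $B=W$ and $B\otimes_W B=B$, so $\HH_n^W(B,B/P)=0$ for $n\geq1$, giving the second case of the lemma.

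In the ramified case, fix a uniformizer $\pi$ satisfying an Eisenstein equation $f(\pi)=0$ with $f(x)\in W[x]$ monic of degree $e\geq2$, so $B\cong W[x]/(f)$. Then $B\otimes_W B\cong B[y]/(f(y))$, and substituting $y=x+t$ and Taylor expanding produces $f(y)=t\cdot h(t)$ with $h(t)\in B[t]$ monic of degree $e-1$ and $h(0)=f'(\pi)$. Hence $B\otimes_W B\cong B[t]/(t\,h(t))$ is free of rank $e$ over $B$, and the augmentation to $B$ sends $t\mapsto 0$. Since $t\cdot h(t)=0$ with $\mathrm{ann}(t)=(h(t))$ and $\mathrm{ann}(h(t))=(t)$ in this ring, the two-periodic sequence
$$\cdots\xrightarrow{h(t)}B\otimes_W B\xrightarrow{t}B\otimes_W B\xrightarrow{h(t)}B\otimes_W B\xrightarrow{t}B\otimes_W B\longrightarrow B\longrightarrow 0$$
is a free resolution of $B$ as a $(B\otimes_W B)$-module. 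Tensoring with $B/P$ replaces $t$ by $0$ and $h(t)$ by $f'(\pi)\bmod P$. For any Eisenstein polynomial of degree $e\geq2$, $f'(\pi)$ has strictly positive $\pi$-adic valuation (exactly $e-1$ when the ramification is tame, and strictly larger when it is wild), so $f'(\pi)\in P$; every differential becomes zero after tensoring, and $\HH_1^W(B,B/P)\cong B/P$, as desired.

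The main point requiring care is the étale reduction, since $W\otimes_{\Z_p}W\neq W$ as soon as $\F_q\neq\F_p$: one has to check that the non-diagonal factors of $B\otimes_{\Z_p}B$ act as zero on both $B$ and $B/P$, so that the Tor computation genuinely splits along the product decomposition. Everything else is mechanical, as the resolution above is the standard ``Koszul of a zero-divisor pair'' construction for the principal ideal $(t)\subset B[t]/(t\,h(t))$, and the Eisenstein property of $f$ makes the valuation bound on $f'(\pi)$ elementary.
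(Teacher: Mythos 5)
Your proof is correct, but it follows a genuinely different route from the one in the paper. The paper works directly over $\Z_p$: it quotes Serre to write $B=\Z_p[x]/(f(x))$ monogenically, cites Tate's computation $\HH_1^R(R[x]/(f))\cong R[x]/(f,f')$, identifies the ideal $(f'(x))$ with the different $\mathcal{D}_{B/\Z_p}$, passes to $B/P$ coefficients by a universal-coefficient argument (using that $\HH_0^{\Z_p}(B)\cong B$ is $B$-free to kill the Tor correction), and then lets the criterion ``$P$ ramifies iff $P\mid\mathcal{D}_{B/\Z_p}$'' decide both cases at once from the single formula $B/(\mathcal{D}_{B/\Z_p},P)$. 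You instead first split off the maximal unramified subring $W=W(\F_q)$ via the idempotent decomposition of $W\otimes_{\Z_p}W$ (your flagged point of care, and it is fine: $B$ and $B/P$ are indeed supported on the diagonal factor, so the Tor computation descends to $B\otimes_W B$), dispose of the unramified case trivially, and in the totally ramified case build the explicit two-periodic resolution of $B$ over $B[t]/(t\,h(t))$ -- in effect reproving Tate's result rather than citing it -- and replace the different-ideal theory by the elementary estimate $v(f'(\pi))\geq e-1>0$ for an Eisenstein polynomial of degree $e\geq 2$. What each approach buys: the paper's argument is shorter given the quoted facts from Serre and Tate and treats ramified and unramified uniformly; yours is more self-contained (no different ideal, no universal-coefficient step, since the $B/P$ coefficients are built into the Tor from the start) and, as a byproduct of the periodic resolution, actually yields all of $\HH_*^{\Z_p}(B,B/P)$, not just degree one. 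Both establish the lemma as stated.
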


\begin{proof}
By Proposition 12~in Chapter~3 of \cite{serre}, $B$ is generated over
$\Z_p$ by a single element, $B=\Z_p[x]/(f(x))$ for some monic
polynomial $f$.  By a well-known calculation which can be traced back
to \cite{tate}, for any ring $R$ and monic polynomial $f(x)$ over it,
$\HH^R_1(R[x]/(f(x)))\cong R[x]/(f(x), f'(x))$.   By Corollary~2 of
Chapter~3 of \cite{serre}, the ideal $(f'(x))$ in $\Z_p[x]/(f(x))$ is
equal to the different ideal $\mathcal{D}_{B/\Z_p}$, so the result for
$B$ over $\Z_p$ becomes
\[ \HH^{\Z_p}_1(B)\cong B/\mathcal{D}_{B/\Z_p}. \] 
Since $B$ is commutative, we also know that $\HH^{\Z_p}_0(B)\cong B$ is
free over $B$, hence
 $\Tor_{s}^B(\HH^{\Z_p}_0(B), B/P)=0$ for $s>0$. If we tensor the Hochschild
complex of $B$ over $B$ with $B/P$, then we get by the universal
coefficient theorem, that
\[ \HH^{\Z_p}_1(B, B/P)\cong \HH^{\Z_p}_1(B)\otimes_B B/P\cong
B/\mathcal{D}_{B/\Z_p} \otimes_B B/P\cong B/(\mathcal{D}_{B/\Z_p},
P). \] 
If $\mathcal{D}_{B/\Z_p} \subseteq P$, this is just $B/P$, but if not,
by the maximality of $P$ in $B$ we get that  $B/(\mathcal{D}_{B/\Z_p}, P)\cong 0$.
Theorem~1 in Chapter~3 of \cite{serre} says that an extension $B$
of $\Z_p$ is ramified at an ideal $P$ of $B$ if and only if $P$
divides the different ideal $\mathcal{D}_{B/\Z_p}$.
\end{proof}

From this we establish the one-dimensional (ordinary topological
Hoch\-schild homology) case of Theorem~\ref{numberring},
which is
closely related to Theorem~4.4 in \cite{LM}, and in fact in the unramified
case is exactly Theorem~4.4 (i) there (since in the unramified case $P=pA$
for a rational prime $p$).  The symbol $x_i$ in the statement
indicates a generator of degree $i$.

\begin{prop}
\label{numberringTHH}
Let $A$ be the ring of integers in a number field, and let $P$ be a
nonzero prime ideal in $A$.
Denote the residue field $A/P$ by $\F_q$.
\begin{enumerate}[(i)]
\item  If  $A$ is  ramified over $\Z$ at $P$,
  $\thh{1}_*(A^\wedge_P, \F_q)\cong \F_q[x_2]
  \otimes_{\F_q}\Lambda_{\F_q}[x_1]$.
\item  If $A$ is unramified over $\Z$ at $P$, $\thh{1}_*(A^\wedge_P,\F_q)\!\cong\!
  \F_q[x_{2p}] \!\otimes_{\F_q}\! \Lambda_{\F_q}[x_{2p-1}]$.
  \end{enumerate}
\end{prop}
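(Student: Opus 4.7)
The plan is to reduce both cases to B\"okstedt's computation of $\THH_*(\Z,\F_p)$ via a base-change analysis along the map $H\Z_p \to HA^\wedge_P$, closely following the strategy of Lindenstrauss-Madsen.

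For the unramified case (ii), write $B:=A^\wedge_P$, which is the ring of Witt vectors $W(\F_q)$ and is formally \'etale over $\Z_p$. The vanishing of relative topological Andr\'e-Quillen homology for an \'etale extension yields a base-change equivalence $\THH(B) \simeq \THH(\Z_p) \wedge^L_{H\Z_p} HB$. Smashing with $H\F_q$ over $HB$, and using that $\F_q/\F_p$ is also \'etale, one obtains
$$\THH(B,\F_q) \;\simeq\; \THH(\Z_p) \wedge^L_{H\Z_p} H\F_q \;\simeq\; \THH(\Z_p,\F_p) \wedge^L_{H\F_p} H\F_q,$$
whose homotopy is $\F_q \otimes_{\F_p}\bigl(\F_p[x_{2p}]\otimes\Lambda(z_{2p-1})\bigr)$ by B\"okstedt's theorem. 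This case is essentially Theorem~4.4(i) of \cite{LM}, which I would simply cite.

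For the ramified case (i), I would write $B=\Z_p[\alpha]/(f(\alpha))$ monogenically (Serre, Ch.~3, Prop.~12) and run the Pirashvili-Waldhausen/B\"okstedt spectral sequence
$$E^2_{s,t} \;=\; \HH^{\Z_p}_s(B;\,\THH_t(\Z_p,\F_q)) \;\Longrightarrow\; \THH_{s+t}(B,\F_q),$$
where $B$ acts on the coefficient bimodule $\THH_*(\Z_p,\F_q)$ through the augmentation $B \to B/P = \F_q$. The 2-periodic Koszul resolution of $B$ over $B\otimes_{\Z_p} B$ has periodic differential given by multiplication by $f'(\alpha)$; in the ramified case $f'(\alpha)\in P$ (in degree $1$ this is Lemma~\ref{padicHH}, and in general this is Serre, Ch.~3, Cor.~2), so after reducing mod $P$ all periodic differentials vanish and $\HH^{\Z_p}_*(B,\F_q)$ is as large as possible, with a specific graded-commutative $\F_q$-algebra structure. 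Combined with $\THH_*(\Z_p,\F_q)$ from (ii), this pins down the $E^2$-page explicitly.

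The main obstacle will be the identification of the differentials and multiplicative extensions collapsing this large $E^2$-page onto the much smaller target $\F_q[x_2]\otimes_{\F_q}\Lambda_{\F_q}(x_1)$: nontrivial differentials must eliminate the B\"okstedt classes $x_{2p}$ and $z_{2p-1}$ from the coefficient module, and a multiplicative extension must convert the divided-power structure coming from the HH-part into the polynomial generator $x_2$. My plan here would parallel the ramified half of Lindenstrauss-Madsen's Theorem~4.4: filter $HB$ by its Postnikov tower over $H\Z_p$, use Proposition~\ref{square0exterior} to recognize the augmented commutative $H\F_q$-algebra structure at each stage, and bootstrap from the known $\THH(\Z_p,\F_q)$. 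Once the algebra extensions are determined, matching degrees against $B^1_{\F_q}(x_P)\otimes_{\F_q} B^2_{\F_q}(y_P)$ with $|x_P|=2$, $|y_P|=0$ will yield the claimed isomorphism.
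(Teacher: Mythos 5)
Your unramified case is fine: you reduce to B\"okstedt via \'etale base change and, as you note, this is exactly Theorem~4.4(i) of Lindenstrauss--Madsen, which the paper also simply identifies as that result. The problem is the ramified case, which is the actual content of the proposition, and there your argument has a genuine gap. You set up the relative spectral sequence $E^2_{s,t}=\HH^{\Z_p}_s(B;\THH_t(\Z_p,\F_q))\Rightarrow\THH_{s+t}(B,\F_q)$, whose $E^2$-term is huge: in the ramified case $\HH^{\Z_p}_*(B,\F_q)\cong\Lambda_{\F_q}(\epsilon_1)\otimes\Gamma_{\F_q}(\varphi_2)$ (one dimension in every degree) tensored with the B\"okstedt algebra $\F_q[x_{2p}]\otimes\Lambda(z_{2p-1})$, while the abutment is one-dimensional in each degree. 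So almost everything must die, and on top of that the surviving divided power classes $\gamma_{p^k}(\varphi_2)$ must be reassembled by multiplicative extensions into the polynomial generator $x_2$. You correctly identify this as ``the main obstacle,'' but you do not resolve it: the proposed fix --- ``filter $HB$ by its Postnikov tower over $H\Z_p$'' --- is not meaningful as stated ($HB$ is an Eilenberg--Mac\,Lane spectrum, so its Postnikov tower is trivial), and Proposition~\ref{square0exterior} recognizes square-zero extensions with exterior homotopy; it says nothing about how to convert a divided power associated graded into a polynomial algebra, which is exactly the extension problem your spectral sequence leaves open. As it stands, the ramified computation is a plan, not a proof.

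For comparison, the paper avoids all of this by running Brun's spectral sequence (Theorem~3.3 of \cite{LM}) for the quotient map $B\to B/P$ rather than the relative sequence for $\Z_p\to B$: since $P$ is principal, $\Tor^B_*(B/P,B/P)\cong\Lambda_{\F_q}(\tau_1)$, so the $E^2$-term $\THH_*(\F_q,\Tor^B_*(B/P,B/P))\cong\F_q[u_2]\otimes\Lambda_{\F_q}(\tau_1)$ has only two rows. The single possibly nonzero differential $d^2$ is pinned down in both the ramified and unramified cases by the degree-one computation $\HH_1^{\Z_p}(B,B/P)$ (Lemma~\ref{padicHH}, using that $\HH_1$ and $\THH_1$ agree), and there are no extension problems because the resulting $E^\infty$ is already the freest graded-commutative algebra on its generators. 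If you want to salvage your route for the ramified case, you would need an actual mechanism (e.g., comparison with a known multiplicative model, or Bockstein/obstruction-theoretic input as in \cite{LM}) that determines the differentials on the B\"okstedt classes and settles the divided-power versus polynomial extension; none is supplied in your sketch.
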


\begin{proof}
We set $B=A^\wedge_P$, to get a ring that satisfies the
conditions of\linebreak Lemma~\ref{padicHH}.
We now use
$P$ to denote the ideal in $B$ obtained as $PB$ for the ideal $P$ of
$A$.  We use Morten Brun's spectral sequence \cite[p.~30]{brun} from
Theorem 3.3 in \cite{LM} for the map $B\to B/P$.  This gives a multiplicative
spectral sequence
\[ E^2_{r,s}=\THH_r(B/P, \Tor_s^B(B/P, B/P))\Rightarrow \THH_{r+s} (B, B/P).
\] 
Since $P$ is a principal ideal in $B$, generated by any uniformizer
$\pi$, the resolution
\[ \xymatrix@1{0 \ar[r] & B \ar[r]^{\cdot \pi} & B\ar[r] & B/P \ar[r] & 0} \]
shows that $\Tor_*^B (B/P, B/P)\cong\Lambda_{\F_q}(\tau_1)$ for a
$1$-dimensional generator $\tau_1$, where $B/P =(A^\wedge_P)/P \cong
A/P=\F_q$.

\smallskip
B\"okstedt showed in \cite{boe} that $\THH_*(\F_p)\cong \F_p[u_2]$,
and since $\HH_*(\F_q)$ consists only of $\F_q$ in dimension zero, the
spectral sequence of Theorem 2.2 in \cite{LSpSq}
\[ E^2_{r,s}=\HH_r^{\F_p} (\F_q, \THH_s(\F_p;\F_q) )\Rightarrow
\THH_{r+s}(\F_q)\] 
consists only of $\F_q\otimes \F_p[u_2]$ in the zeroth row, we get that
\[ \THH_*(\F_q)\cong \F_q[u_2]. \] 
Thus Brun's spectral sequence takes the form
\[ E^2_{*,0}\cong \F_q[u_2],\ \ \ E^2_{*, 1} \cong \tau_1\cdot \F_q[u_2]. \]
From Lemma~\ref{padicHH} and the fact that Hochschild and topological
Hochschild homology agree in degree $1$, we know  that we
end up with nothing in total degree $1$ if $B$ is unramified over
$\Z_p$, and with a copy of $\F_q$ if $B$ is ramified.  So we get
\[ d^2 (u_2)
=\begin{cases}
0, &\text{ if } B \text{ is  ramified over } \Z_p \text{ at } P,\\
\text{(unit)} \cdot\tau_1, & \text{ otherwise.}\end{cases}\] 
In the ramified case we already know that $d^2$ vanishes on $1$
and $\tau_1$, since there is nothing these elements could hit. Therefore,
we get that
$d^2=0$. As $d^2$ is the last differential that could be
nontrivial, $E^\infty_{*,*}\cong E^2_{*,*}\cong
\Lambda_{\F_q}(\tau_1)\otimes_{\F_q}\F_q[u_2]$, and since this is the
multiplication with the fewest relations possible that could be defined on a
graded-commutative algebra with this linear structure, extensions
cannot give any other multiplicative structure and we get
\[ \THH_*(B, B/P)\cong  \Lambda_{\F_q}(\tau_1)\otimes_{\F_q}\F_q[u_2]. \] 

In the unramified case, the knowledge what $d^2$ does on the generators
shows us that $d^2(u_2^a)=\mbox{(unit)}\cdot\tau_1\cdot u_2^{a-1}$
when $p$ does not divide $a$, but $d^2(u_2^{pk})=0$ and nothing hits
the elements $\tau_1\cdot u_2^{pk-1}$.  Again $d^2$ is the last
differential that could be nonzero so $E^\infty_{*,*}\cong
E^3_{*,*}\cong \Lambda_{\F_q}(\tau_1\cdot
u_2^{p-1})\otimes_{\F_q}\F_q[u_2^p]$, and since this is again a
multiplication with the fewest relations possible that could be defined on a
graded-commutative algebra with this linear structure, extensions
cannot give any other multiplicative structure and we get
\[ \THH_*(B, B/P)\cong  \Lambda_{\F_q}(\tau_1\cdot
u_2^{p-1})\otimes_{\F_q}\F_q[u_2^p].\vspace{-1em}\]  \end{proof}

\begin{theo}
\label{numberring}
Let $A$ be the ring of integers in a number field,  and let $P$ be a
nonzero prime ideal in $A$.
Denote the residue field $A/P$ by $\F_q$.

Then
\[ \thh{n}_*(A^\wedge_P, \F_q)\cong B_{\F_q}^{n}(x_P)\otimes_{\F_q}
B_{\F_q}^{n+1}(y_P) \] 
where
\begin{enumerate}
\item[(i)] $|x_P| =2$ and $|y_P|=0$ if $A$ is  ramified over $\Z$ at $P$, and
\item[(ii)] $|x_P|=2p$ and   $|y_P|=2p-2$, if $A$ is unramified over
  $\Z$ at $P$.
\end{enumerate}
\end{theo}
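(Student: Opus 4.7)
The plan is to run the proof of Theorem~\ref{thm:thhnhz} from Section 3 verbatim, with $B = A^\wedge_P$ replacing $\Z$ and $\F_q$ replacing $\F_p$, using Proposition~\ref{numberringTHH} as the base case $n=1$. That proposition produces $\THH_*(B, \F_q) \cong \F_q[x_a] \otimes_{\F_q} \Lambda_{\F_q}(z_{a-1})$, where $a := |x_P|$ equals $2p$ in the unramified case and $2$ in the ramified case. This has the same algebraic form as $\THH_*(\Z, \F_p)$. The spectrum $\THH(B, \F_q) = \THH(B) \wedge_{HB} H\F_q$ inherits a commutative augmented $H\F_q$-algebra structure, and the iterative description $\THH^{[n+1]}(B, \F_q) \simeq H\F_q \wedge^L_{\THH^{[n]}(B,\F_q)} H\F_q$ of \cite{Ve} remains in force.

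The first step is to take the $(a-1)$-st Postnikov section of $\THH(B, \F_q)$ in commutative augmented $H\F_q$-algebras; its homotopy is $\Lambda_{\F_q}(z_{a-1})$ with $a-1 > 0$, so by Proposition~\ref{square0exterior} it is stably equivalent to $H\F_q \vee \Sigma^{a-1} H\F_q$. Next I would set up the homotopy pushout of Section 3 and compute the homotopy groups of $(H\F_q \vee \Sigma^{a-1} H\F_q) \wedge^L_{\THH(B,\F_q)} H\F_q$ by a bar spectral sequence. Its $E^2$-term, a $\Tor$ group over $\F_q[x_a] \otimes \Lambda(z_{a-1})$, reduces via the resolution of $\Lambda(z_{a-1})$ by multiplication by $x_a$ to $\Lambda_{\F_q}(y_{a+1})$ and collapses for degree reasons. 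Proposition~\ref{square0exterior} then identifies this pushout with $H\F_q \vee \Sigma^{a+1} H\F_q$. The analog of Lemma~\ref{lem:factor} holds by the same diagram chase, since it invokes only the factorization of the augmentation through the Postnikov map.

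Setting $E(z) = H\F_q \vee \Sigma^{a-1} H\F_q$ and $E(y) = H\F_q \vee \Sigma^{a+1} H\F_q$, I would then iterate the splitting lemma of Section 3 to obtain
$$\THH^{[n+1]}_*(B, \F_q) \cong \pi_* \mathrm{diag}\, B^{(n)}_{H\F_q}(H\F_q, E(z), H\F_q) \otimes_{\F_q} \pi_* \mathrm{diag}\, B^{(n-1)}_{H\F_q}(H\F_q, \underline{E(y)}, H\F_q).$$
The iterated differential graded bar construction computations from \cite{wit} then identify the first factor with $B^{n+2}_{\F_q}(y_P)$ and the second with $B^{n+1}_{\F_q}(x_P)$, so after reindexing we obtain the formula of Theorem~\ref{numberring}.

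The main obstacle is purely bookkeeping: one must verify that every step of Section 3 uses only the abstract algebraic form of $\THH_*(B, \F_q)$ as $\F_q[x_a] \otimes \Lambda(z_{a-1})$, and not any special feature of $\Z$ or $\F_p$. The one delicate point is the ramified case with $a = 2$, where the exterior generator has the minimal allowed degree $1$; however, Proposition~\ref{square0exterior} only requires $m > 0$, so the argument goes through uniformly, and the bar spectral sequences collapse for the same degree reasons in both cases.
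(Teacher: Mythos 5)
Your proposal is correct and coincides with the paper's own argument: the paper likewise establishes the $n=1$ case via Proposition~\ref{numberringTHH} and then states that the rest "proceeds by exact analogy" to the Section~3 calculation of $\thh{n}_*(\Z,\F_p)$, which is precisely the substitution $B=A^\wedge_P$, $\F_q$ for $\F_p$, and generators in degrees $a$ and $a-1$ that you carry out. Your extra check that Proposition~\ref{square0exterior} only needs $m>0$ (covering the ramified case $a-1=1$) is a sensible piece of bookkeeping that the paper leaves implicit.
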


\begin{proof}
The $n=1$ case is true  by Proposition~\ref{numberringTHH},  with
$x=x_2$ and $y$ zero-dimensional (so that
$B^1_{\F_q}(y)\cong\Lambda_{\F_q}(x_1)$)  in the ramified case, and with
$x=x_{2p}$ and $y$ of dimension $2p-2$ (so that
$B^1_{\F_q}(y)\cong\Lambda_{\F_q}(x_{2p-1})$) in the unramified case.

The rest of the proof proceeds by exact analogy to the calculation of
$\THH^{[n]}_*(\Z,\F_p)$.
\end{proof}

\begin{note}
  The ramified case  Theorem~\ref{numberring} (i) can actually be
  proven quite algebraically by noting that for an arbitrary flat ring
  $A$ and $A$-bimodule $M$, the linearization map $\THH(A,M)\to
  H(\HH^\Z(A,M))$ is $3$-connected so that the first Postnikov sections
  of $\THH(A,M)$ and $H(\HH^\Z(A,M))$ agree.  As a matter of fact, when
  $A$ is a $\Z_{(p)}$-algebra, B\"okstedt's calculation of the
  topological Hochschild homology of the integers gives that
  Theorem~2.3 of \cite{LSpSq} implies that this can be improved to
  saying that $\THH(A,M)\to H(\HH^\Z(A,M))$ is $(2p-1)$-connected.
  This means that the Postnikov section involved in the crucial step
  moving from $\THH$ to to the algebraic $\THH^{[2]}$ coincides with
  that of Hochschild homology.  This was how we originally established
  the calculation in the ramified case.
\end{note}

\begin{note}
The unramified case, Theorem~\ref{numberring}(ii), could have also been deduced
from Theorem~\ref{thm:thhnhz} by showing that
\[ \thh{n}_*(A^\wedge_P, A/P)\cong\thh{n}_*(\Z_p, \F_p)\otimes \F_q\quad
	(\text{where } \F_q=A/P) \] 
as an augmented
$\F_q$-algebra, where the augmentation on the right-hand
side comes from the augmentation of $\thh{n}_*(\Z,\F_p)$ over $\F_p$,
tensored with the identity of $\F_q$.
This is true for $n=1$ by
Theorem 4.4 (i) of  \cite{LM}, and then we proceed inductively, using
the decomposition from \eqref{eq:decomp}. This yields in this case
a decomposition
\[ 
\thh{n+1}(A^\wedge_P
,\F_q)\simeq H\F_q\wedge^L_{\thh{n}(A^\wedge_P
,\F_q)} H\F_q.
\] 
and  a multiplicative spectral sequence
\[ \Tor_{*,*}^{\thh{n}_*(A^\wedge_P,\F_q)} (\F_q,\F_q)\Rightarrow
\thh{n+1}_*(A^\wedge_P,\F_q), \] 
which can be rewritten by the inductive hypothesis as
\begin{align*}
	&\quad\ \Tor_{*,*}^{\thh{n}_*(\Z_p, \F_p)\otimes \F_q} (\F_p\otimes
\F_q,\F_p\otimes \F_q)\\
&\cong
\Tor_{*,*}^{\thh{n}_*(\Z_p, \F_p)}(\F_p,\F_p) \otimes \Tor_*^{\F_q}(\F_q,\F_q)\\
	&\cong
\Tor_{*,*}^{\thh{n}_*(\Z, \F_p)}(\F_p,\F_p) \otimes \F_q,
\end{align*}
where the first factor is the image of the $E^2$-term of the spectral
sequence calculating
$\thh{n+1}_*(\Z, \F_p)$ and the second term is in $E^2_{0,0}$ and
therefore can cause no nontrivial differentials or multiplicative
extensions.  This splitting is a splitting of algebras and the
augmentation is that of the first factor  tensored with the identity
of the second.
\end{note}

\section*{Acknowledgements} 
This material is based upon work supported by  
the National Science Foundation  under Grant No. 0932078000 while the
authors were in residence at the Mathematical Sciences Research
Institute in Berkeley California, during the Spring 2014 program on
Algebraic Topology. 

We are grateful to  Mike Mandell for providing the crucial hint
that there is just one homotopy type of augmented commutative
$H\F_p$-algebras with homotopy groups $\Lambda_{\F_p}(x)$.

\end{document}